\newcommand{\doublewidetilde}[1]{{%
  \mathpalette\double@widetilde{#1}%
}}
\newcommand{\double@widetilde}[2]{%
  \sbox\z@{$\m@th#1\widetilde{#2}$}%
  \ht\z@=.9\ht\z@
  \widetilde{\box\z@}%
}
\DeclarePairedDelimiter\ord{\lvert}{\rvert}
\providecommand\divides{} % Trick used because \divides is defined with some packages
\renewcommand*\divides{\mathrel{|}} % If you want it bigger by default use \mathrel{\big|}
\newcolumntype{P}[1]{>{\centering\arraybackslash}p{#1}}
\newcolumntype{L}{>{$}l<{$}}
\gdef\pampmatrix{%
  \begingroup
  \let&=\amsamp
  \begin{pmatrix}%
}
\gdef\endpampmatrix{\end{pmatrix}\endgroup}
\newtheorem{theorem}{Theorem}[section]
\newtheorem{lemma}[theorem]{Lemma}
\newtheorem{remark}[theorem]{Remark}
\newcommand{\octada}[8]{
\begin{tikzpicture}
\draw (0,1.05) -- (1.8,1.05) -- (1.8,-1.05) -- (0,-1.05) -- (0,1.05);
\draw[anchor=east] (0.8,0.75) node {$#1$};
\draw[anchor=east] (1.6,0.75) node {$#5$};
\draw[anchor=east] (0.8,0.25) node {$#2$};
\draw[anchor=east] (1.6,0.25) node {$#6$};
\draw[anchor=east] (0.8,-0.25) node {$#3$};
\draw[anchor=east] (1.6,-0.25) node {$#7$};
\draw[anchor=east] (0.8,-0.75) node {$#4$};
\draw[anchor=east] (1.6,-0.75) node {$#8$};
\end{tikzpicture}
}
\newcommand{\octadb}[8]{
\begin{tikzpicture}
\draw (0,1.05) -- (1.8,1.05) -- (1.8,-1.05) -- (0,-1.05);
\draw[anchor=east] (0.8,0.75) node {$#1$};
\draw[anchor=east] (1.6,0.75) node {$#5$};
\draw[anchor=east] (0.8,0.25) node {$#2$};
\draw[anchor=east] (1.6,0.25) node {$#6$};
\draw[anchor=east] (0.8,-0.25) node {$#3$};
\draw[anchor=east] (1.6,-0.25) node {$#7$};
\draw[anchor=east] (0.8,-0.75) node {$#4$};
\draw[anchor=east] (1.6,-0.75) node {$#8$};
\end{tikzpicture}
}
\newcommand{\octadba}[7]{
\begin{tikzpicture}
\draw (0,1.05) -- (1.8,1.05) -- (1.8,-1.05) -- (0,-1.05);
\draw[anchor=east] (0.8,0.75) node {$#1$};
\draw[anchor=east] (1.6,0.75) node {$#5$};
\draw[anchor=east] (1.4,0.25) node {$#2$};
\draw[anchor=east] (0.8,-0.25) node {$#3$};
\draw[anchor=east] (1.6,-0.25) node {$#6$};
\draw[anchor=east] (0.8,-0.75) node {$#4$};
\draw[anchor=east] (1.6,-0.75) node {$#7$};
\end{tikzpicture}
}
\newcommand{\octadbb}[7]{
\begin{tikzpicture}
\draw (0,1.05) -- (1.8,1.05) -- (1.8,-1.05) -- (0,-1.05);
\draw[anchor=east] (0.8,0.75) node {$#1$};
\draw[anchor=east] (1.6,0.75) node {$#5$};
\draw[anchor=east] (0.8,0.25) node {$#2$};
\draw[anchor=east] (1.6,0.25) node {$#6$};
\draw (1.0,-0.25) node {$#3$};
\draw[anchor=east] (0.8,-0.75) node {$#4$};
\draw[anchor=east] (1.6,-0.75) node {$#7$};
\end{tikzpicture}
}
\newcommand{\octadbc}[6]{
\begin{tikzpicture}
\draw (0,1.05) -- (1.8,1.05) -- (1.8,-1.05) -- (0,-1.05);
\draw[anchor=east] (0.8,0.75) node {$#1$};
\draw[anchor=east] (1.6,0.75) node {$#5$};
\draw[anchor=east] (0.8,0.25) node {$#2$};
\draw[anchor=east] (1.6,0.25) node {$#6$};
\draw (1.0,-0.20) node {$#3$};
\draw (1.0,-0.70) node {$#4$};
\end{tikzpicture}
}
\newcommand{\octadc}[9]{
\begin{tikzpicture}
\draw (0,1.05) -- (1.8,1.05) -- (1.8,-1.05) -- (0,-1.05);
\draw[anchor=east] (0.8,0.75) node {$#1$};
\draw[anchor=east] (1.6,0.75) node {$#5$};
\draw[anchor=east] (0.8,0.25) node {$#2$};
\draw[anchor=east] (1.6,0.25) node {$#6$};
\draw[anchor=east] (0.8,-0.25) node {$#3$};
\draw[anchor=east] (1.6,-0.25) node {$#7$};
\draw[anchor=east] (0.8,-0.75) node {$#4$};
\draw[anchor=east] (1.6,-0.75) node {$#8$};
\draw (2.0,1.05) node {$#9$};
\end{tikzpicture}
}
\newcommand{\hoctad}[9]{
\begin{tabular}{|p{.13in}p{.13in}|p{.13in}p{.13in}|}
\hline
\makebox[0.13in][r]{#1}&\makebox[0.13in][r]{#3}&\makebox[0.13in][r]{#5}&\makebox[0.13in][r]{#7}\\
\makebox[0.13in][r]{#2}&\makebox[0.13in][r]{#4}&\makebox[0.13in][r]{#6}&\makebox[0.13in][r]{#8}\\
\hline
\end{tabular}^{#9}
}
\newcommand{\singlex}[8]{
\begin{tabular}{|p{.13in}p{.13in}|}
\hline
\makebox[0.13in][r]{$#1$}&\makebox[0.13in][r]{$#5$}\\
\makebox[0.13in][r]{$#2$}&\makebox[0.13in][r]{$#6$}\\
\makebox[0.13in][r]{$#3$}&\makebox[0.13in][r]{$#7$}\\
\makebox[0.13in][r]{$#4$}&\makebox[0.13in][r]{$#8$}\\
\hline
\end{tabular}^{\times}
}
\newcommand{\singlenox}[8]{
\begin{tabular}{|p{.13in}p{.13in}|}
\hline
\makebox[0.13in][r]{$#1$}&\makebox[0.13in][r]{$#5$}\\
\makebox[0.13in][r]{$#2$}&\makebox[0.13in][r]{$#6$}\\
\makebox[0.13in][r]{$#3$}&\makebox[0.13in][r]{$#7$}\\
\makebox[0.13in][r]{$#4$}&\makebox[0.13in][r]{$#8$}\\
\hline
\end{tabular}
}
\newcommand{\smalloctada}[8]{
\begin{tikzpicture}[scale=0.84]
\draw (0,1.05) -- (1.18,1.05) -- (1.18,-1.05) -- (0,-1.05) -- (0,1.05);
\draw[anchor=east] (0.68,0.75) node {$\scriptstyle{#1}$};
\draw[anchor=east] (1.16,0.75) node {$\scriptstyle{#5}$};
\draw[anchor=east] (0.68,0.25) node {$\scriptstyle{#2}$};
\draw[anchor=east] (1.16,0.25) node {$\scriptstyle{#6}$};
\draw[anchor=east] (0.68,-0.25) node {$\scriptstyle{#3}$};
\draw[anchor=east] (1.16,-0.25) node {$\scriptstyle{#7}$};
\draw[anchor=east] (0.68,-0.75) node {$\scriptstyle{#4}$};
\draw[anchor=east] (1.16,-0.75) node {$\scriptstyle{#8}$};
\end{tikzpicture}
}
\newcommand{\smalloctadb}[8]{
\begin{tikzpicture}[scale=0.84]
\draw (0,1.05) -- (1.18,1.05) -- (1.18,-1.05) -- (0,-1.05);
\draw[anchor=east] (0.68,0.75) node {$\scriptstyle{#1}$};
\draw[anchor=east] (1.16,0.75) node {$\scriptstyle{#5}$};
\draw[anchor=east] (0.68,0.25) node {$\scriptstyle{#2}$};
\draw[anchor=east] (1.16,0.25) node {$\scriptstyle{#6}$};
\draw[anchor=east] (0.68,-0.25) node {$\scriptstyle{#3}$};
\draw[anchor=east] (1.16,-0.25) node {$\scriptstyle{#7}$};
\draw[anchor=east] (0.68,-0.75) node {$\scriptstyle{#4}$};
\draw[anchor=east] (1.16,-0.75) node {$\scriptstyle{#8}$};
\end{tikzpicture}
}
\newcommand{\smalloctadba}[7]{
\begin{tikzpicture}[scale=0.84]
\draw (0,1.05) -- (1.18,1.05) -- (1.18,-1.05) -- (0,-1.05);
\draw[anchor=east] (0.68,0.75) node {$\scriptstyle{#1}$};
\draw[anchor=east] (1.16,0.75) node {$\scriptstyle{#5}$};
\draw[anchor=east] (1.055,0.25) node {$\scriptstyle{#2}$};
\draw[anchor=east] (0.68,-0.25) node {$\scriptstyle{#3}$};
\draw[anchor=east] (1.16,-0.25) node {$\scriptstyle{#6}$};
\draw[anchor=east] (0.68,-0.75) node {$\scriptstyle{#4}$};
\draw[anchor=east] (1.16,-0.75) node {$\scriptstyle{#7}$};
\end{tikzpicture}
}
\newcommand{\smalloctadc}[9]{
\begin{tikzpicture}[scale=0.84]
\draw (0,1.05) -- (1.18,1.05) -- (1.18,-1.05) -- (0,-1.05);
\draw[anchor=east] (0.68,0.75) node {$\scriptstyle{#1}$};
\draw[anchor=east] (1.16,0.75) node {$\scriptstyle{#5}$};
\draw[anchor=east] (0.68,0.25) node {$\scriptstyle{#2}$};
\draw[anchor=east] (1.16,0.25) node {$\scriptstyle{#6}$};
\draw[anchor=east] (0.68,-0.25) node {$\scriptstyle{#3}$};
\draw[anchor=east] (1.16,-0.25) node {$\scriptstyle{#7}$};
\draw[anchor=east] (0.68,-0.75) node {$\scriptstyle{#4}$};
\draw[anchor=east] (1.16,-0.75) node {$\scriptstyle{#8}$};
\draw (1.38,1.05) node {$#9$};
\end{tikzpicture}
}
\newcommand{\octadempty}[1]{
\begin{tikzpicture}
\draw (0,1.05) node {};
\draw (0,0) node {$#1$};
\draw (0,-1.05) node {};
\end{tikzpicture}
}
\newcommand{\octadaline}[8]{
\begin{tikzpicture}
\draw (0,1.1) -- (1.6,1.1) -- (1.6,-1.1) -- (0,-1.1) -- (0,1.1);
%\draw (0,0) -- (1.6,0);
\draw (0.4,0.8) node {$#1$};
\draw (1.2,0.8) node {$#5$};
\draw (0.4,0.3) node {$#2$};
\draw (1.2,0.3) node {$#6$};
\draw (0.4,-0.3) node {$#3$};
\draw (1.2,-0.3) node {$#7$};
\draw (0.4,-0.8) node {$#4$};
\draw (1.2,-0.8) node {$#8$};
\end{tikzpicture}
}
\newcommand{\octadbline}[8]{
\begin{tikzpicture}
\draw (0,1.1) -- (1.6,1.1) -- (1.6,-1.1) -- (0,-1.1);
%\draw (0,0) -- (1.6,0);
\draw (0.4,0.8) node {$#1$};
\draw (1.2,0.8) node {$#5$};
\draw (0.4,0.3) node {$#2$};
\draw (1.2,0.3) node {$#6$};
\draw (0.4,-0.3) node {$#3$};
\draw (1.2,-0.3) node {$#7$};
\draw (0.4,-0.8) node {$#4$};
\draw (1.2,-0.8) node {$#8$};
\end{tikzpicture}
}
\newcommand{\octadcline}[9]{
\begin{tikzpicture}
\draw (0,1.1) -- (1.6,1.1) -- (1.6,-1.1) -- (0,-1.1);
\draw (0,0) -- (1.6,0);
\draw (0.4,0.8) node {$#1$};
\draw (1.2,0.8) node {$#5$};
\draw (0.4,0.3) node {$#2$};
\draw (1.2,0.3) node {$#6$};
\draw (0.4,-0.3) node {$#3$};
\draw (1.2,-0.3) node {$#7$};
\draw (0.4,-0.8) node {$#4$};
\draw (1.2,-0.8) node {$#8$};
\draw (1.8,1.1) node {$#9$};
\end{tikzpicture}
}
\newcommand{\octademptyline}[1]{
\begin{tikzpicture}
\draw (0,1.1) node {};
\draw (0,0) node {$#1$};
\draw (0,-1.1) node {};
\end{tikzpicture}
}
\newcommand{\octadex}[9]{
\begin{tabular}{|p{.05in}p{.05in}|p{.05in}p{.05in}|p{.05in}p{.05in}|}
\hline
\makebox[0.08in][r]{$\pm4$}&\makebox[0.08in][r]{#1}&\makebox[0.08in][r]{$#2$}&\makebox[0.08in][r]{$#6$}&\makebox[0.08in][r]{}&\makebox[0.08in][r]{}\\
\makebox[0.08in][r]{}&\makebox[0.08in][r]{}&\makebox[0.08in][r]{$#3$}&\makebox[0.08in][r]{$#7$}&\makebox[0.08in][r]{}&\makebox[0.08in][r]{}\\
\makebox[0.08in][r]{}&\makebox[0.08in][r]{}&\makebox[0.08in][r]{$#4$}&\makebox[0.08in][r]{$#8$}&\makebox[0.08in][r]{}&\makebox[0.08in][r]{}\\
\makebox[0.08in][r]{}&\makebox[0.08in][r]{}&\makebox[0.08in][r]{$#5$}&\makebox[0.08in][r]{$#9$}&\makebox[0.08in][r]{}&\makebox[0.08in][r]{}\\
\hline
\end{tabular}^{\times}
}
\tikzset{
  % style to apply some styles to each segment of a path
  on each segment/.style={
    decorate,
    decoration={
      show path construction,
      moveto code={},
      lineto code={
        \path [#1]
        (\tikzinputsegmentfirst) -- (\tikzinputsegmentlast);
      },
      curveto code={
        \path [#1] (\tikzinputsegmentfirst)
        .. controls
        (\tikzinputsegmentsupporta) and (\tikzinputsegmentsupportb)
        ..
        (\tikzinputsegmentlast);
      },
      closepath code={
        \path [#1]
        (\tikzinputsegmentfirst) -- (\tikzinputsegmentlast);
      },
    },
  },
  % style to add an arrow in the middle of a path
  mid arrow/.style={postaction={decorate,decoration={
        markings,
        mark=at position .5 with {\arrow[#1]{stealth}}
      }}},
}
\newcommand{\epssub}{\varepsilon_{\begin{tikzpicture}
\draw (0,0.45) -- (1.5,0.45) -- (1.5,-0.45) -- (0,-0.45) -- (0,0.45);
\draw (0.5,-0.45) -- (0.5,0.45);
\draw (1.0,-0.45) -- (1.0,0.45);
\path (0.1,0.3) node[draw,shape=circle,fill=black,scale=0.075] (p1) {};
\path (0.4,0.1) node[draw,shape=circle,fill=black,scale=0.075] (p6) {};
\path (0.6,0.1) node[draw,shape=circle,fill=black,scale=0.075] (p10) {};
\path (0.9,0.1) node[draw,shape=circle,fill=black,scale=0.075] (p14) {};
\path (0.9,-0.1) node[draw,shape=circle,fill=black,scale=0.075] (p15) {};
\path (0.9,-0.3) node[draw,shape=circle,fill=black,scale=0.075] (p16) {};
\path (1.1,-0.1) node[draw,shape=circle,fill=black,scale=0.075] (p19) {};
\path (1.4,-0.3) node[draw,shape=circle,fill=black,scale=0.075] (p24) {};
\end{tikzpicture}
}}
\newcommand{\Stab}{\mathrm{Stab}}
\newcommand{\Dih}{\mathrm{Dih}}
\newcommand{\Sym}{\mathrm{Sym}}
\newcommand{\Alt}{\mathrm{Alt}}
\newcommand{\Fix}{\mathrm{Fix}}
\newcommand{\M}{\mathrm{M}}
\newcommand{\PSL}{\mathrm{PSL}}
\begin{document}
\begin{comment}
\begin{tabular}{p{3cm} p{12cm} }
\textbf{Title} &  A Note on the Rank 5 Polytopes of $\M_{24}$\\
\\
\textbf{Running Title} & $\M_{24}$ Rank 5 Polytopes\\
\\
\textbf{Abstract} &  The maximal rank of an abstract regular polytope for $\M_{24}$, the Mathieu group of degree 24, is 5. There are four such polytopes of rank 5 and in this note we describe them using Curtis's MOG. This description is then used to give an upper bound for the diameter of the chamber graphs of these polytopes.\\
\\
\textbf{Authors}& Veronica Kelsey \\
& vk49@st-andrews.ac.uk\\
& School of Mathematics and Statistics, University of St Andrews, Mathematical Institute, North Haugh, St Andrews, KY16 9SS\\
\\
& Robert Nicolaides \\
&robert.nicolaides@postgrad.manchester.ac.uk\\
& School of Mathematics, Manchester University, Alan Turing Building, Oxford Road, Manchester, M13 6PL\\
\\
& Peter Rowley \\
&peter.j.rowley@manchester.ac.uk\\
& School of Mathematics, Manchester University, Alan Turing Building, Oxford Road, Manchester, M13 6PL\\
\end{tabular}
\newpage
\end{comment}
\title{\Huge{ A Note on the Rank 5 Polytopes of $\M_{24}$}}
\author{Veronica Kelsey, Robert Nicolaides, Peter Rowley \footnote{Email address of corresponding author: peter.j.rowley@manchester.ac.uk  \newline Key words: abstract regular polytope, maximal rank, Mathieu group, MOG \newline MSC: 52B15, 52B05, 20D08} \\
 \small{Department of Mathematics, University of Manchester, Oxford Road, M13 6PL, UK}}
\maketitle
\begin{abstract}
The maximal rank of an abstract regular polytope for $\M_{24}$, the Mathieu group of degree 24, is 5. There are four such polytopes of rank 5 and in this note we describe them using Curtis's MOG. This description is then used to give an upper bound for the diameter of the chamber graphs of these polytopes.
\end{abstract}
\date{}
\maketitle

\section{Introduction}

Investigations into abstract regular polytopes for the Mathieu group $\M_{24}$ by Hartley and Hulpke \cite{hartleyhulpke} revealed that, in total, there are 1,260 such polytopes. Of these there are only four of rank 5, the highest rank of the $\M_{24}$ polytopes, with these forming two dual pairs. Polytopes of maximal rank, at least $5$, for a particular group are usually relatively few in number and therefore deserve further attention. See \cite{cameronfernandesleemnas}, \cite{fernandesleemans1} and \cite{fernandesleemans2} for examples of work in this direction. Among the sporadic finite simple simple groups $\M_{24}$ is pre-eminent because of both the richness of the many
 associated combinatorial objects (see, for example, the \textsc{Atlas} \cite{atlas} and \cite{conwaysloane}) and its influence on many of the other sporadic groups. The purpose of this short note is to display the rank 5 abstract regular polytopes of $\M_{24}$, using Curtis's (amazing) MOG \cite{curtis1} as the backdrop, with the aim of making these polytopes more transparent. As an application of these descriptions we probe the chamber graphs of these rank 5 polytopes, proving the following theorem.

\begin{theorem}\label{maintheorem} Suppose $\mathcal{P}$ is a rank 5 abstract regular polytope for $\M_{24}$. 
\begin{enumerate}
\item [(i)] If $\mathcal{P}$ has Schlafli symbol [4,10,3,4], then its chamber graph has diameter at most 127.
\item[(ii)] If $\mathcal{P}$ has Schlafli symbol [4,10,3,3], then its chamber graph has diameter at most 133.
\end{enumerate}
\end{theorem}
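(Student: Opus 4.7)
Since $\M_{24}$ acts regularly on the chambers of $\mathcal{P}$, the chamber graph is isomorphic to the Cayley graph $\Gamma(\M_{24}, S)$ where $S=\{r_0,r_1,r_2,r_3,r_4\}$ is the distinguished set of involutory generators underlying the string $C$-group structure. Its diameter equals the maximum, over $g \in \M_{24}$, of the shortest word length $\ell_S(g)$ needed to express $g$ in $S$. The plan is to bound $\ell_S(g)$ uniformly by organising a breadth-first search from the identity, using the MOG descriptions of $r_0, \ldots, r_4$ developed in the preceding sections.

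Because $|\M_{24}| = 244{,}823{,}040$ is far too large for a naive vertex-by-vertex BFS, I would instead work with double cosets of a facet stabiliser. Setting $H = \langle r_1, r_2, r_3, r_4 \rangle$, the generators $r_1, \ldots, r_4$ preserve every double coset $HgH$, so only right multiplication by $r_0$ can jump between double cosets. A BFS on the much smaller set $H\backslash\M_{24}/H$, moving by $r_0$, records at depth $k$ both the reachable double cosets and the minimum number $N$ of $r_0$-moves required. Any $g$ then factorises as $h_0 r_0 h_1 r_0 \cdots r_0 h_N$ with each $h_i \in H$, so $\ell_S(g) \le N + \sum_{i=0}^N \ell_{S \setminus \{r_0\}}(h_i)$. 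Since $H$ is itself a string $C$-group of rank $4$ with Schlafli symbol $[10,3,4]$ or $[10,3,3]$ (the vertex-figure of $\mathcal{P}$), and $|H| \ll |\M_{24}|$, the inner word lengths can be bounded either by applying the same technique recursively inside $H$ or by a direct calculation in the much smaller $H$. Adding the contributions produces the numerical bounds $127$ and $133$.

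The main obstacle is the concrete bookkeeping. Each step of the double-coset BFS requires multiplying a representative by $r_0$ in the permutation form supplied by the MOG and recognising into which of the (far fewer) double cosets the product falls; and the factorisation $g = h_0 r_0 \cdots r_0 h_N$ must be chosen so that the terms $\ell_{S \setminus \{r_0\}}(h_i)$ are all as small as possible, otherwise the bound will exceed $127$ (respectively $133$). It is at this point that the MOG becomes indispensable: the hexad and octad structure turns each multiplication into a transparent diagrammatic operation, reducing what would otherwise be an opaque computation in a group of order $\approx 2.4 \times 10^8$ to a tractable, visible case analysis. The numerical difference between the two bounds reflects the different vertex-figure structures, and hence the different diameters of the analogous Cayley graph for $H$, in the two cases.
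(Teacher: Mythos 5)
Your high-level strategy — decompose $\M_{24}$ into double cosets of the rank-4 subgroup $H=\langle r_1,r_2,r_3,r_4\rangle$ obtained by dropping the first generator, and handle $H$ recursively — is indeed the skeleton of the paper's argument (there $H=\langle g_2,g_3,g_4,g_5\rangle=\langle g_2,g_3,g_4,g_6\rangle\cong \M_{12}{:}2$, and the recursion bottoms out at $\langle g_2,g_3,g_4\rangle\cong \PSL_2(11){:}2$ of order $1320$, whose Cayley diameter $24$ is computed by machine). But as written your bound does not produce $127$ or $133$, and the step you wave at ("the factorisation must be chosen so that the $\ell(h_i)$ are all as small as possible") is exactly where the real content lies. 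Your estimate $\ell_S(g)\le N+\sum_{i=0}^{N}\ell_{S\setminus\{r_0\}}(h_i)$ forces you to bound every interior factor $h_i$ by the full diameter of $H$: with three $H$-double cosets ($N\le 2$) and four double cosets at the inner level, this gives something like $2+3\,\mathrm{diam}(H)$ with $\mathrm{diam}(H)\le 3+4\cdot 24=99$, i.e.\ a bound near $300$, not $127$. The BFS-by-$r_0$ on double cosets does not by itself yield representatives that are short as words, because the intermediate $H$-elements it inserts are uncontrolled.

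What the paper supplies, and what is missing from your proposal, is the sharper inequality $\ell(g)\le \mathrm{diam}(K)+\ell(y)+\mathrm{diam}(K)$ for $g\in KyK$, together with a certified \emph{complete} list of double-coset representatives $y$ that are explicitly short words in the generators: length at most $9$ for the three double cosets of $\M_{12}{:}2$ in $\M_{24}$, and length at most $11$ (resp.\ $14$) for the four double cosets of $\PSL_2(11){:}2$ in $\M_{12}{:}2$ for the two C-strings. Completeness of the list comes from the permutation characters in the \textsc{Atlas} (rank $3$ and rank $4$ respectively), and distinctness of the proposed representatives is verified by MOG-combinatorial invariants — the images of the duad $\Delta$ (using that the two duads $\Delta_1,\Delta_2$ lie in different orbits of $\PSL_2(11){:}2$) and intersections $D\cap D^{y}$ with the dodecad $D$. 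This is how $24+11+24=59$, then $59+9+59=127$ (and $24+14+24=62$, then $62+9+62=133$) arise. Without (i) the count of double cosets, (ii) explicit short representatives with a proof that they exhaust the double cosets, and (iii) the base diameter $24$, your argument establishes only a much weaker bound, so as it stands the claimed constants are asserted rather than proved.
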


The number of chambers of our polytopes is the order of $\M_{24}$ which is 244,823,040. Much of the work so far on analysing polytopes often makes considerable use of machine calculations. Here, in the proof of Theorem~\ref{maintheorem}, we make limited use of machine calculations -- specifically to investigate the group $\PSL_2(11):2$ of order 1320. In fact, with patience, this could be done without reliance on machine.

\section{The Rank 5 Polytopes}

As is well-known, abstract regular polytopes and C-strings are equivalent formulations \cite{mcmullenschulte}. Here we approach the rank 5 polytopes of $\M_{24}$ via the route of C-strings. We recall that a set of involutions $\{t_1, \dots t_n\} $ is a C-string for a group $G$ if they generate $G$ and, setting $I =\{1, \dots, n \}$, satisfy
\begin{enumerate}
\item [(i)] for $i, j \in I$ with $|i - j| \geq 2, t_it_j = t_jt_i$; and
\item [(ii)] for $J, K \subseteq I, G_J \cap G_K = G_{J \cap K}$.
\end{enumerate} 
Here, for $\emptyset \neq J \subseteq I$, we put $G_J = \langle t_i \; | \; i \in J \rangle$ and $G_{\emptyset} =1$. In the context of C-strings, the chamber graph of the corresponding abstract regular polytope, is just the Cayley graph with respect to the generating set $\{t_1, \dots t_n\} $. In this graph, for $g \in G$ and $i \in \mathbb{N}$, denote the vertices of the Cayley graph distance $i$ from $g$ by $\Delta_i(g)$.

We now let $\Omega$ be a $24$-element set, equipped with the Steiner system $S(5,8,24)$ as provided by Curtis's MOG \cite{curtis1}, and let $G$ denote the automorphism group of this Steiner system. Then $G \cong \M_{24}$. Sometimes we shall use Curtis's labelling of the elements of $\Omega$ given in \cite{curtis1}. We shall encounter the following subset of $\Omega$
$$D =  \begin{array}{c}
\smalloctada{}{\times}{\times}{}{}{\times}{}{\times}
\smalloctadb{}{\times}{}{\times}{}{\times}{\times}{}
\smalloctadc{}{\times}{}{\times}{}{\times}{\times}{}{}
\end{array}.$$
Since $D$ is the symmetric difference of the two octads
$$ \begin{array}{c}
\smalloctada{}{\times}{\times}{\times}{\times}{}{}{}
\smalloctadb{}{\times}{}{}{}{\times}{}{}
\smalloctadc{}{\times}{}{}{}{\times}{}{}{}
\end{array} \text{ and } \begin{array}{c}
\smalloctada{}{}{}{\times}{\times}{\times}{}{\times}
\smalloctadb{}{}{}{\times}{}{}{\times}{}
\smalloctadc{}{}{}{\times}{}{}{\times}{}{}
\end{array},$$
it is a dodecad of $\Omega$. Hence the partition $\{ D, \Omega \setminus D \}$ is a duum. 
Put $L = \Stab_G(\{ D, \Omega \setminus D \})$. 
Then $L \cong \M_{12}:2$ (see \cite{curtis1}).

We define the following involutions of $G$ -- that they belong to $G$ may be confirmed either from \cite{curtis1} or using \cite{curtis2}. Below a pair of elements of $\Omega$ in the MOG diagram joined by a line means the involution interchanges those two elements, while a single dot indicates that the involution fixes that element of $\Omega$.
\begin{multicols}{3}
$$\begin{tikzpicture}
 \path (0,1.05) node[draw=none] (p0) {\phantom{$g_3=$}};
 \path (0,0.0) node[anchor=north] (p1) {$g_1=$};
 \path (0,-1.05) node[draw=none] (p0) {\phantom{$g_3=$}};
\end{tikzpicture}
\begin{tikzpicture}
\draw (0,0.9) -- (3.0,0.9) -- (3.0,-0.9) -- (0,-0.9) -- (0,0.9);
\draw (1.0,-0.9) -- (1.0,0.9);
\draw (2.0,-0.9) -- (2.0,0.9);
\path (0.2,0.6) node[draw,shape=circle,fill=black,scale=0.15] (p1) {};
\path (0.2,0.2) node[draw,shape=circle,fill=black,scale=0.15] (p2) {};
\path (0.2,-0.2) node[draw,shape=circle,fill=black,scale=0.15] (p3) {};
\path (0.2,-0.6) node[draw,shape=circle,fill=black,scale=0.15] (p4) {};
\path (0.8,0.6) node[draw,shape=circle,fill=black,scale=0.15] (p5) {};
\path (0.8,0.2) node[draw,shape=circle,fill=black,scale=0.15] (p6) {};
\path (0.8,-0.2) node[draw,shape=circle,fill=black,scale=0.15] (p7) {};
\path (0.8,-0.6) node[draw,shape=circle,fill=black,scale=0.15] (p8) {};
\path (1.2,0.6) node[draw,shape=circle,fill=black,scale=0.15] (p9) {};
\path (1.2,0.2) node[draw,shape=circle,fill=black,scale=0.15] (p10) {};
\path (1.2,-0.2) node[draw,shape=circle,fill=black,scale=0.15] (p11) {};
\path (1.2,-0.6) node[draw,shape=circle,fill=black,scale=0.15] (p12) {};
\path (1.8,0.6) node[draw,shape=circle,fill=black,scale=0.15] (p13) {};
\path (1.8,0.2) node[draw,shape=circle,fill=black,scale=0.15] (p14) {};
\path (1.8,-0.2) node[draw,shape=circle,fill=black,scale=0.15] (p15) {};
\path (1.8,-0.6) node[draw,shape=circle,fill=black,scale=0.15] (p16) {};
\path (2.2,0.6) node[draw,shape=circle,fill=black,scale=0.15] (p17) {};
\path (2.2,0.2) node[draw,shape=circle,fill=black,scale=0.15] (p18) {};
\path (2.2,-0.2) node[draw,shape=circle,fill=black,scale=0.15] (p19) {};
\path (2.2,-0.6) node[draw,shape=circle,fill=black,scale=0.15] (p20) {};
\path (2.8,0.6) node[draw,shape=circle,fill=black,scale=0.15] (p21) {};
\path (2.8,0.2) node[draw,shape=circle,fill=black,scale=0.15] (p22) {};
\path (2.8,-0.2) node[draw,shape=circle,fill=black,scale=0.15] (p23) {};
\path (2.8,-0.6) node[draw,shape=circle,fill=black,scale=0.15] (p24) {};
\draw  (p9) to (p10);
\draw  (p13) to (p14);
\draw  (p11) to (p12);
\draw  (p15) to (p16);
\draw  (p17) to (p18);
\draw  (p21) to (p22);
\draw  (p19) to (p20);
\draw  (p23) to (p24);
\end{tikzpicture}
\begin{tikzpicture}
 \path (0,1.05) node[draw=none] (p0) {\phantom{}};
 \path (0,0.0) node[anchor=north] (p1) {};
 \path (0,-1.05) node[draw=none] (p0) {\phantom{}};
\end{tikzpicture}$$

$$\begin{tikzpicture}
 \path (0,1.05) node[draw=none] (p0) {\phantom{$g_3=$}};
 \path (0,0.0) node[anchor=north] (p1) {$g_2=$};
 \path (0,-1.05) node[draw=none] (p0) {\phantom{$g_3=$}};
\end{tikzpicture}
\begin{tikzpicture}
\draw (0,0.9) -- (3.0,0.9) -- (3.0,-0.9) -- (0,-0.9) -- (0,0.9);
\draw (1.0,-0.9) -- (1.0,0.9);
\draw (2.0,-0.9) -- (2.0,0.9);
\path (0.2,0.6) node[draw,shape=circle,fill=black,scale=0.15] (p1) {};
\path (0.2,0.2) node[draw,shape=circle,fill=black,scale=0.15] (p2) {};
\path (0.2,-0.2) node[draw,shape=circle,fill=black,scale=0.15] (p3) {};
\path (0.2,-0.6) node[draw,shape=circle,fill=black,scale=0.15] (p4) {};
\path (0.8,0.6) node[draw,shape=circle,fill=black,scale=0.15] (p5) {};
\path (0.8,0.2) node[draw,shape=circle,fill=black,scale=0.15] (p6) {};
\path (0.8,-0.2) node[draw,shape=circle,fill=black,scale=0.15] (p7) {};
\path (0.8,-0.6) node[draw,shape=circle,fill=black,scale=0.15] (p8) {};
\path (1.2,0.6) node[draw,shape=circle,fill=black,scale=0.15] (p9) {};
\path (1.2,0.2) node[draw,shape=circle,fill=black,scale=0.15] (p10) {};
\path (1.2,-0.2) node[draw,shape=circle,fill=black,scale=0.15] (p11) {};
\path (1.2,-0.6) node[draw,shape=circle,fill=black,scale=0.15] (p12) {};
\path (1.8,0.6) node[draw,shape=circle,fill=black,scale=0.15] (p13) {};
\path (1.8,0.2) node[draw,shape=circle,fill=black,scale=0.15] (p14) {};
\path (1.8,-0.2) node[draw,shape=circle,fill=black,scale=0.15] (p15) {};
\path (1.8,-0.6) node[draw,shape=circle,fill=black,scale=0.15] (p16) {};
\path (2.2,0.6) node[draw,shape=circle,fill=black,scale=0.15] (p17) {};
\path (2.2,0.2) node[draw,shape=circle,fill=black,scale=0.15] (p18) {};
\path (2.2,-0.2) node[draw,shape=circle,fill=black,scale=0.15] (p19) {};
\path (2.2,-0.6) node[draw,shape=circle,fill=black,scale=0.15] (p20) {};
\path (2.8,0.6) node[draw,shape=circle,fill=black,scale=0.15] (p21) {};
\path (2.8,0.2) node[draw,shape=circle,fill=black,scale=0.15] (p22) {};
\path (2.8,-0.2) node[draw,shape=circle,fill=black,scale=0.15] (p23) {};
\path (2.8,-0.6) node[draw,shape=circle,fill=black,scale=0.15] (p24) {};
\draw  (p5) to [bend left=45] (p9);
\draw  (p6) to [bend right=45] (p10);
\draw  (p11) to [bend right=45] (p7);
\draw  (p8) to [bend right=45] (p12);
\draw  (p1) to  [bend right=15](p13);
\draw  (p2) to [bend left=15] (p14);
\draw  (p3) to [bend right=15] (p15);
\draw  (p4) to [bend left=15] (p16);
\end{tikzpicture}
\begin{tikzpicture}
 \path (0,1.05) node[draw=none] (p0) {\phantom{}};
 \path (0,0.0) node[anchor=north] (p1) {};
 \path (0,-1.05) node[draw=none] (p0) {\phantom{}};
\end{tikzpicture}$$

$$\begin{tikzpicture}
 \path (0,1.05) node[draw=none] (p0) {\phantom{$g_3=$}};
 \path (0,0.0) node[anchor=north] (p1) {$g_3=$};
 \path (0,-1.05) node[draw=none] (p0) {\phantom{$g_3=$}};
\end{tikzpicture}
\begin{tikzpicture}
\draw (0,0.9) -- (3.0,0.9) -- (3.0,-0.9) -- (0,-0.9) -- (0,0.9);
\draw (1.0,-0.9) -- (1.0,0.9);
\draw (2.0,-0.9) -- (2.0,0.9);
\path (0.2,0.6) node[draw,shape=circle,fill=black,scale=0.15] (p1) {};
\path (0.2,0.2) node[draw,shape=circle,fill=black,scale=0.15] (p2) {};
\path (0.2,-0.2) node[draw,shape=circle,fill=black,scale=0.15] (p3) {};
\path (0.2,-0.6) node[draw,shape=circle,fill=black,scale=0.15] (p4) {};
\path (0.8,0.6) node[draw,shape=circle,fill=black,scale=0.15] (p5) {};
\path (0.8,0.2) node[draw,shape=circle,fill=black,scale=0.15] (p6) {};
\path (0.8,-0.2) node[draw,shape=circle,fill=black,scale=0.15] (p7) {};
\path (0.8,-0.6) node[draw,shape=circle,fill=black,scale=0.15] (p8) {};
\path (1.2,0.6) node[draw,shape=circle,fill=black,scale=0.15] (p9) {};
\path (1.2,0.2) node[draw,shape=circle,fill=black,scale=0.15] (p10) {};
\path (1.2,-0.2) node[draw,shape=circle,fill=black,scale=0.15] (p11) {};
\path (1.2,-0.6) node[draw,shape=circle,fill=black,scale=0.15] (p12) {};
\path (1.8,0.6) node[draw,shape=circle,fill=black,scale=0.15] (p13) {};
\path (1.8,0.2) node[draw,shape=circle,fill=black,scale=0.15] (p14) {};
\path (1.8,-0.2) node[draw,shape=circle,fill=black,scale=0.15] (p15) {};
\path (1.8,-0.6) node[draw,shape=circle,fill=black,scale=0.15] (p16) {};
\path (2.2,0.6) node[draw,shape=circle,fill=black,scale=0.15] (p17) {};
\path (2.2,0.2) node[draw,shape=circle,fill=black,scale=0.15] (p18) {};
\path (2.2,-0.2) node[draw,shape=circle,fill=black,scale=0.15] (p19) {};
\path (2.2,-0.6) node[draw,shape=circle,fill=black,scale=0.15] (p20) {};
\path (2.8,0.6) node[draw,shape=circle,fill=black,scale=0.15] (p21) {};
\path (2.8,0.2) node[draw,shape=circle,fill=black,scale=0.15] (p22) {};
\path (2.8,-0.2) node[draw,shape=circle,fill=black,scale=0.15] (p23) {};
\path (2.8,-0.6) node[draw,shape=circle,fill=black,scale=0.15] (p24) {};
\draw  (p1) to [bend left=45] (p3);
\draw  (p2) to  (p7);
\draw  (p4) to (p6);
\draw  (p5) to [bend right=45] (p8);
\draw  (p9) to  (p10);
\draw  (p11) to [bend right=15] (p23);
\draw  (p12) to [bend left=15] (p24);
\draw  (p13) to (p14);
\draw  (p15) to [bend right=15] (p19);
\draw  (p16) to [bend left=15] (p20);
\draw  (p17) to  (p18);
\draw  (p21) to  (p22);
\end{tikzpicture}
\begin{tikzpicture}
 \path (0,1.05) node[draw=none] (p0) {\phantom{}};
 \path (0,0.0) node[anchor=north] (p1) {};
 \path (0,-1.05) node[draw=none] (p0) {\phantom{}};
\end{tikzpicture}$$
\end{multicols}
\begin{multicols}{3}
$$\begin{tikzpicture}
 \path (0,1.05) node[draw=none] (p0) {\phantom{$g_3=$}};
 \path (0,0.0) node[anchor=north] (p1) {$g_4=$};
 \path (0,-1.05) node[draw=none] (p0) {\phantom{$g_3=$}};
\end{tikzpicture}
\begin{tikzpicture}
\draw (0,0.9) -- (3.0,0.9) -- (3.0,-0.9) -- (0,-0.9) -- (0,0.9);
\draw (1.0,-0.9) -- (1.0,0.9);
\draw (2.0,-0.9) -- (2.0,0.9);
\path (0.2,0.6) node[draw,shape=circle,fill=black,scale=0.15] (p1) {};
\path (0.2,0.2) node[draw,shape=circle,fill=black,scale=0.15] (p2) {};
\path (0.2,-0.2) node[draw,shape=circle,fill=black,scale=0.15] (p3) {};
\path (0.2,-0.6) node[draw,shape=circle,fill=black,scale=0.15] (p4) {};
\path (0.8,0.6) node[draw,shape=circle,fill=black,scale=0.15] (p5) {};
\path (0.8,0.2) node[draw,shape=circle,fill=black,scale=0.15] (p6) {};
\path (0.8,-0.2) node[draw,shape=circle,fill=black,scale=0.15] (p7) {};
\path (0.8,-0.6) node[draw,shape=circle,fill=black,scale=0.15] (p8) {};
\path (1.2,0.6) node[draw,shape=circle,fill=black,scale=0.15] (p9) {};
\path (1.2,0.2) node[draw,shape=circle,fill=black,scale=0.15] (p10) {};
\path (1.2,-0.2) node[draw,shape=circle,fill=black,scale=0.15] (p11) {};
\path (1.2,-0.6) node[draw,shape=circle,fill=black,scale=0.15] (p12) {};
\path (1.8,0.6) node[draw,shape=circle,fill=black,scale=0.15] (p13) {};
\path (1.8,0.2) node[draw,shape=circle,fill=black,scale=0.15] (p14) {};
\path (1.8,-0.2) node[draw,shape=circle,fill=black,scale=0.15] (p15) {};
\path (1.8,-0.6) node[draw,shape=circle,fill=black,scale=0.15] (p16) {};
\path (2.2,0.6) node[draw,shape=circle,fill=black,scale=0.15] (p17) {};
\path (2.2,0.2) node[draw,shape=circle,fill=black,scale=0.15] (p18) {};
\path (2.2,-0.2) node[draw,shape=circle,fill=black,scale=0.15] (p19) {};
\path (2.2,-0.6) node[draw,shape=circle,fill=black,scale=0.15] (p20) {};
\path (2.8,0.6) node[draw,shape=circle,fill=black,scale=0.15] (p21) {};
\path (2.8,0.2) node[draw,shape=circle,fill=black,scale=0.15] (p22) {};
\path (2.8,-0.2) node[draw,shape=circle,fill=black,scale=0.15] (p23) {};
\path (2.8,-0.6) node[draw,shape=circle,fill=black,scale=0.15] (p24) {};
\draw  (p1) to  (p2);
\draw  (p3) to  (p4);
\draw  (p5) to [bend right=30] (p8);
\draw  (p6) to  [bend left=30] (p7);
\draw  (p9) to  [bend left=30]  (p12);
\draw  (p10) to [bend right=30] (p11);
\draw  (p13) to  (p14);
\draw  (p15) to (p16);
\draw  (p17) to [bend left=30] (p20);
\draw  (p18) to [bend right=30] (p19);
\draw  (p21) to  (p22);
\draw  (p23) to  (p24);
\end{tikzpicture}
\begin{tikzpicture}
 \path (0,1.05) node[draw=none] (p0) {\phantom{}};
 \path (0,0.0) node[anchor=north] (p1) {};
 \path (0,-1.05) node[draw=none] (p0) {\phantom{}};
\end{tikzpicture}$$

$$\begin{tikzpicture}
 \path (0,1.05) node[draw=none] (p0) {\phantom{$g_3=$}};
 \path (0,0.0) node[anchor=north] (p1) {$g_5=$};
 \path (0,-1.05) node[draw=none] (p0) {\phantom{$g_3=$}};
\end{tikzpicture}
\begin{tikzpicture}
\draw (0,0.9) -- (3.0,0.9) -- (3.0,-0.9) -- (0,-0.9) -- (0,0.9);
\draw (1.0,-0.9) -- (1.0,0.9);
\draw (2.0,-0.9) -- (2.0,0.9);
\path (0.2,0.6) node[draw,shape=circle,fill=black,scale=0.15] (p1) {};
\path (0.2,0.2) node[draw,shape=circle,fill=black,scale=0.15] (p2) {};
\path (0.2,-0.2) node[draw,shape=circle,fill=black,scale=0.15] (p3) {};
\path (0.2,-0.6) node[draw,shape=circle,fill=black,scale=0.15] (p4) {};
\path (0.8,0.6) node[draw,shape=circle,fill=black,scale=0.15] (p5) {};
\path (0.8,0.2) node[draw,shape=circle,fill=black,scale=0.15] (p6) {};
\path (0.8,-0.2) node[draw,shape=circle,fill=black,scale=0.15] (p7) {};
\path (0.8,-0.6) node[draw,shape=circle,fill=black,scale=0.15] (p8) {};
\path (1.2,0.6) node[draw,shape=circle,fill=black,scale=0.15] (p9) {};
\path (1.2,0.2) node[draw,shape=circle,fill=black,scale=0.15] (p10) {};
\path (1.2,-0.2) node[draw,shape=circle,fill=black,scale=0.15] (p11) {};
\path (1.2,-0.6) node[draw,shape=circle,fill=black,scale=0.15] (p12) {};
\path (1.8,0.6) node[draw,shape=circle,fill=black,scale=0.15] (p13) {};
\path (1.8,0.2) node[draw,shape=circle,fill=black,scale=0.15] (p14) {};
\path (1.8,-0.2) node[draw,shape=circle,fill=black,scale=0.15] (p15) {};
\path (1.8,-0.6) node[draw,shape=circle,fill=black,scale=0.15] (p16) {};
\path (2.2,0.6) node[draw,shape=circle,fill=black,scale=0.15] (p17) {};
\path (2.2,0.2) node[draw,shape=circle,fill=black,scale=0.15] (p18) {};
\path (2.2,-0.2) node[draw,shape=circle,fill=black,scale=0.15] (p19) {};
\path (2.2,-0.6) node[draw,shape=circle,fill=black,scale=0.15] (p20) {};
\path (2.8,0.6) node[draw,shape=circle,fill=black,scale=0.15] (p21) {};
\path (2.8,0.2) node[draw,shape=circle,fill=black,scale=0.15] (p22) {};
\path (2.8,-0.2) node[draw,shape=circle,fill=black,scale=0.15] (p23) {};
\path (2.8,-0.6) node[draw,shape=circle,fill=black,scale=0.15] (p24) {};
\draw  (p1) to  (p5);
\draw  (p2) to  (p6);
\draw  (p3) to  (p8);
\draw  (p4) to (p7);
\draw  (p9) to   (p13);
\draw  (p10) to  (p14);
\draw  (p11) to  (p16);
\draw  (p12) to (p15);
\draw  (p17) to (p21);
\draw  (p18) to  (p22);
\draw  (p19) to  (p24);
\draw  (p20) to  (p23);
\end{tikzpicture}
\begin{tikzpicture}
 \path (0,1.05) node[draw=none] (p0) {\phantom{}};
 \path (0,0.0) node[anchor=north] (p1) {};
 \path (0,-1.05) node[draw=none] (p0) {\phantom{}};
\end{tikzpicture}$$

$$\begin{tikzpicture}
 \path (0,1.05) node[draw=none] (p0) {\phantom{$g_3=$}};
 \path (0,0.0) node[anchor=north] (p1) {$g_6=$};
 \path (0,-1.05) node[draw=none] (p0) {\phantom{$g_3=$}};
\end{tikzpicture}
\begin{tikzpicture}
\draw (0,0.9) -- (3.0,0.9) -- (3.0,-0.9) -- (0,-0.9) -- (0,0.9);
\draw (1.0,-0.9) -- (1.0,0.9);
\draw (2.0,-0.9) -- (2.0,0.9);
\path (0.2,0.6) node[draw,shape=circle,fill=black,scale=0.15] (p1) {};
\path (0.2,0.2) node[draw,shape=circle,fill=black,scale=0.15] (p2) {};
\path (0.2,-0.2) node[draw,shape=circle,fill=black,scale=0.15] (p3) {};
\path (0.2,-0.6) node[draw,shape=circle,fill=black,scale=0.15] (p4) {};
\path (0.8,0.6) node[draw,shape=circle,fill=black,scale=0.15] (p5) {};
\path (0.8,0.2) node[draw,shape=circle,fill=black,scale=0.15] (p6) {};
\path (0.8,-0.2) node[draw,shape=circle,fill=black,scale=0.15] (p7) {};
\path (0.8,-0.6) node[draw,shape=circle,fill=black,scale=0.15] (p8) {};
\path (1.2,0.6) node[draw,shape=circle,fill=black,scale=0.15] (p9) {};
\path (1.2,0.2) node[draw,shape=circle,fill=black,scale=0.15] (p10) {};
\path (1.2,-0.2) node[draw,shape=circle,fill=black,scale=0.15] (p11) {};
\path (1.2,-0.6) node[draw,shape=circle,fill=black,scale=0.15] (p12) {};
\path (1.8,0.6) node[draw,shape=circle,fill=black,scale=0.15] (p13) {};
\path (1.8,0.2) node[draw,shape=circle,fill=black,scale=0.15] (p14) {};
\path (1.8,-0.2) node[draw,shape=circle,fill=black,scale=0.15] (p15) {};
\path (1.8,-0.6) node[draw,shape=circle,fill=black,scale=0.15] (p16) {};
\path (2.2,0.6) node[draw,shape=circle,fill=black,scale=0.15] (p17) {};
\path (2.2,0.2) node[draw,shape=circle,fill=black,scale=0.15] (p18) {};
\path (2.2,-0.2) node[draw,shape=circle,fill=black,scale=0.15] (p19) {};
\path (2.2,-0.6) node[draw,shape=circle,fill=black,scale=0.15] (p20) {};
\path (2.8,0.6) node[draw,shape=circle,fill=black,scale=0.15] (p21) {};
\path (2.8,0.2) node[draw,shape=circle,fill=black,scale=0.15] (p22) {};
\path (2.8,-0.2) node[draw,shape=circle,fill=black,scale=0.15] (p23) {};
\path (2.8,-0.6) node[draw,shape=circle,fill=black,scale=0.15] (p24) {};
\draw  (p1) to  (p6);
\draw  (p2) to  (p5);
\draw  (p3) to  (p4);
\draw  (p7) to (p8);
\draw  (p9) to   (p14);
\draw  (p10) to  (p13);
\draw  (p11) to  (p12);
\draw  (p15) to (p16);
\draw  (p17) to (p22);
\draw  (p18) to  (p21);
\draw  (p19) to  (p20);
\draw  (p23) to  (p24);
\end{tikzpicture}
\begin{tikzpicture}
 \path (0,1.05) node[draw=none] (p0) {\phantom{}};
 \path (0,0.0) node[anchor=north] (p1) {};
 \path (0,-1.05) node[draw=none] (p0) {\phantom{}};
\end{tikzpicture}$$
\end{multicols}

Three duads (that is, 2-element subsets) of $\Omega$ which will appear in our arguments are

$$\Delta =  \begin{array}{c}
\smalloctada{}{}{}{}{}{}{}{}
\smalloctadb{}{}{}{}{}{}{}{}
\smalloctadc{}{}{}{}{\times}{\times}{}{}{},
\end{array}
\text{ } 
\Delta_1 =  \begin{array}{c}
\smalloctada{}{}{}{}{}{}{}{}
\smalloctadb{}{}{}{}{}{}{}{}
\smalloctadc{\times}{\times}{}{}{}{}{}{}{}
\end{array}  \text{ and } 
\Delta_2 =  \begin{array}{c}
\smalloctada{}{}{}{}{}{}{}{}
\smalloctadb{}{}{}{}{\times}{\times}{}{}
\smalloctadc{}{}{}{}{}{}{}{}{}.
\end{array}$$

For $\{j_1, \dots, j_r \} \subseteq \{1, \dots, 6 \}$, we put $G_{j_1 \dots j_r} = \langle g_{j_1}, \dots, g_{j_r} \rangle$.  Setting $t_i = g_i$ for $i = 1, \dots ,5$ and $s_i = g_i$ for $i = 1, \dots, 4$ and $s_5 = g_6$, we have the following result.

\begin{lemma}\label{L2.1}
\begin{enumerate}

\item[(i)] $\langle g_1, g_2, g_3, g_4 \rangle  
= \Stab_G(\Delta) \cong \M_{22}:2, \langle g_2, g_3, g_4 \rangle \cong \PSL_2(11):2$ and 
$\langle g_2, g_3, g_4, g_5 \rangle = \langle g_2, g_3, g_4, g_6 \rangle  
 = \Stab_G(\{ D, \Omega \setminus {D} \}) \cong \M_{12}:2 $.
 \item[(ii)] $\{t_1, t_2, t_3, t_4, t_5 \}$ is a C-string for $G$ with Schlafli symbol
$$ 
 \begin{picture}(220,80)(0,-30)
    \put(20,20){\circle{4}}
    \put(62,20){\line(1,0){36}}
    \put(60,20){\circle{4}}
    \put(22,20){\line(1,0){36}}
	\put(102,20){\line(1,0){36}}
     \put(142,20){\line(1,0){36}}
    \put(100,20){\circle{4}}
    \put(140,20){\circle{4}}
    \put(180,20){\circle{4}}
    \put(17,0){$t_1$}
    \put(57,0){$t_2$}
    \put(97,0){$t_3$}
    \put(137,0){$t_4$}
    \put(177,0){$t_5$}
     \put(38,25){$4$}
	\put(72,25){$10$}
	\put(117,25){$3$}
	\put(157,25){$4$}
     \put(190,18){.}
\end{picture}$$ 
\item[(iii)] $\{s_1, s_2, s_3, s_4, s_5 \}$ is a C-string for $G$ with Schlafli symbol
$$ 
 \begin{picture}(220,80)(0,-30)
    \put(20,20){\circle{4}}
    \put(62,20){\line(1,0){36}}
    \put(60,20){\circle{4}}
    \put(22,20){\line(1,0){36}}
	\put(102,20){\line(1,0){36}}
     \put(142,20){\line(1,0){36}}
    \put(100,20){\circle{4}}
    \put(140,20){\circle{4}}
    \put(180,20){\circle{4}}
    \put(17,0){$s_1$}
    \put(57,0){$s_2$}
    \put(97,0){$s_3$}
    \put(137,0){$s_4$}
    \put(177,0){$s_5$}
     \put(38,25){$4$}
	\put(72,25){$10$}
	\put(117,25){$3$}
	\put(157,25){$3$}
     \put(190,18){.}
\end{picture}$$ 

\end{enumerate}
\end{lemma}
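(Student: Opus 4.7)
The plan is to establish (i) first and then use it to reduce (ii) and (iii) to routine computations with explicit permutations. Throughout, the MOG diagrams give $g_1,\dots,g_6$ as concrete involutions on $\Omega$, so every claim below can be verified by a hands-on calculation.

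For the subgroup identifications in (i), I would first confirm by direct inspection of the diagrams that each of $g_1,\dots,g_4$ fixes the duad $\Delta$ setwise, and that each of $g_2,\dots,g_6$ preserves the duum $\{D,\Omega\setminus D\}$. This gives the containments $\langle g_1,\dots,g_4\rangle \leq \Stab_G(\Delta)$ and $\langle g_2,g_3,g_4,g_j\rangle \leq L$ for $j\in\{5,6\}$. To upgrade to equality I would compute a few element orders within the candidate subgroup and track its orbits on $\Omega\setminus\Delta$, exploiting the maximality of $M_{22}{:}2$ and $M_{12}{:}2$ in $M_{24}$: producing an element of order $11$ acting transitively on a $22$-point orbit forces $\langle g_1,\dots,g_4\rangle = \Stab_G(\Delta)$, and similarly for $L$. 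The identification $\langle g_2,g_3,g_4\rangle \cong \PSL_2(11){:}2$ (order $1320$) is more delicate; it can be pinned down by showing the subgroup acts faithfully on $D$ with the $\PSL_2(11){:}2$ orbit structure, supplemented by the brief machine check flagged in the introduction.

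For (ii) and (iii), the commutation relations $t_it_j=t_jt_i$ with $|i-j|\geq 2$ are verified by multiplying the explicit MOG permutations: a finite check of six pairs in each of the $t$- and $s$-strings. The Schlafli entries $o(t_1t_2)=4$, $o(t_2t_3)=10$, $o(t_3t_4)=3$, $o(t_4t_5)=4$ and $o(s_4s_5)=o(g_4g_6)=3$ are then read off directly from the cycle structures of the products, giving the two diagrams.

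The substantive obstacle is the C-string intersection condition $G_J\cap G_K = G_{J\cap K}$. The natural tool is the McMullen--Schulte reduction (Proposition~2E16(a) of \cite{mcmullenschulte}): for a string sggi in which every proper initial and terminal subgroup is already known to be a C-group, it suffices to verify the single equality $\langle t_1,\dots,t_{n-1}\rangle \cap \langle t_2,\dots,t_n\rangle = \langle t_2,\dots,t_{n-1}\rangle$. Applying this inductively down the chain of subgroups $\langle t_5\rangle < \langle t_4,t_5\rangle < \cdots$ identified in (i), the critical case here is $G_{1234}\cap G_{2345}=G_{234}$. Using (i), the left-hand side is contained in $\Stab_G(\Delta)\cap L$; a short MOG argument (or the same machine check) shows that the subgroup of $L$ stabilising $\Delta$ setwise has order exactly $1320$, matching $|\PSL_2(11){:}2| = |G_{234}|$, and an order comparison closes the case. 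The identical argument with $g_5$ replaced by $g_6$ handles the $s$-string, completing (ii) and (iii).
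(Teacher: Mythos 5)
Your overall skeleton (establish (i), verify the string relations by direct computation, then reduce the intersection property to single equalities via Proposition 2E16(a) of McMullen--Schulte) is the same as the paper's, and your treatment of the top-level case is a genuine, valid variant: instead of invoking maximality of $G_{234}$ in $G_{1234}$, you note that $G_{1234}\cap G_{2345}\leq \Stab_G(\Delta)\cap L=\Stab_L(\Delta)$, which has order $190080/144=1320=|\PSL_2(11){:}2|$, so the intersection equals $G_{234}$ by order comparison. However, there are two genuine gaps. First, your argument for the equalities in (i) does not work as stated. Maximality of $\M_{22}{:}2$ and $\M_{12}{:}2$ in $\M_{24}$ is irrelevant here: the containments you verify put $G_{1234}$ \emph{inside} $\Stab_G(\Delta)$ and $G_{2345}$ inside $L$, so the danger is proper \emph{subgroups} of these stabilisers, not overgroups. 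And the criterion you propose -- an element of order $11$ together with transitivity on the $22$-point orbit -- cannot force $G_{1234}=\Stab_G(\Delta)$, because $\M_{22}$, $\PSL_2(11)$ and $\PSL_2(11){:}2$ are proper subgroups of $\M_{22}{:}2$ which are transitive on the $22$ points, stabilise $\Delta$, and contain elements of order $11$; the same objection applies to ``similarly for $L$'', since $G_{234}\cong\PSL_2(11){:}2$ already contains elements of order $11$. Ruling out exactly these subgroups is the crux: the paper does it via Conway's Table 3 (the candidates for $G_{1234}$ are $\M_{22}{:}2$ or $\PSL_2(11){:}2$), the strict containment $G_{234}<G_{1234}$ (since $g_1$ does not preserve the duum), and the observation that a simple group such as $\PSL_2(11)$ cannot have an orbit of size $2$; equality with $L$ then uses $G_{234}<G_{2345},G_{2346}\leq L$ together with the maximality of $\PSL_2(11){:}2$ in $\M_{12}{:}2$. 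Your fallback ``machine check'' could of course settle $|G_{234}|=1320$, but the hand argument you sketch is not sound, and faithful action on $D$ with a given orbit structure does not by itself identify the isomorphism type.

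Second, in (ii) and (iii) you treat $G_{1234}\cap G_{2345}=G_{234}$ as ``the critical case'' and dismiss the rest as an induction ``down the chain identified in (i)''. But (i) identifies none of the lower segments, and the inductive use of 2E16(a) requires you to actually prove that $G_{123}$, $G_{234}$, $G_{345}$, $G_{346}$, $G_{1234}$, $G_{2345}$, $G_{2346}$ are C-groups, i.e.\ to verify the lower-rank intersection conditions $G_{12}\cap G_{23}=G_2$, $G_{23}\cap G_{34}=G_3$, $G_{123}\cap G_{234}=G_{23}$, $G_{234}\cap G_{345}=G_{34}$ and $G_{234}\cap G_{346}=G_{34}$. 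None of these is automatic, and they occupy most of the paper's proof: for instance $G_{12}\cap G_{23}=G_2$ is settled by noting that otherwise $(g_2g_3)^5\in G_{12}$, yet $(g_2g_3)^5$ interchanges two points lying in different $G_{12}$-orbits; $G_{123}\cap G_{234}=G_{23}$ uses that $G_{23}\cong\Dih(20)$ is maximal in $\PSL_2(11){:}2$; and $G_{234}\cap G_{345}=G_{34}$ needs the structure of the $B_3$ Coxeter group plus explicit checks that two specific elements of $G_{345}$ move points of $\Omega$ in a way impossible for elements of $G_{234}$. Your proposal gives no argument for any of these, so as written the application of 2E16(a) at the top rank has no base to stand on. Filling in these verifications (by hand as in the paper, or by an honest finite computation) is necessary to complete (ii) and (iii).
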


\begin{proof}
The orders of $g_ig_j$ as indicated in parts (ii) and (iii) may be readily verified. \\
(i) We observe that both $G_{234}$ and $G_{1234}$ have two orbits on $\Omega$, namely $\Delta$ and $\Omega \setminus \Delta$. Therefore $G_{1234} \leq K = \Stab_G(\Delta) \cong \M_{22}:2$. Also we note that both $G_{2345}$ and $G_{2346}$ leave the duum $\{D, \Omega \setminus D \}$ invariant and therefore $G_{2345}, G_{2346} \leq  L = \Stab_G(\{D, \Omega \setminus D \}) \cong M_{12}:2$. Since $D^{g_1}$ is not equal to $D$ or $\Omega \setminus D$, $g_1 \notin L$. In particular, $g_1 \notin G_{234}$ and so $G_{234} \ne G_{1234}$. By Table 3 of \cite{conway} we have $G_{1234}$ is isomorphic to either $\M_{22}:2$ or $\PSL_2(11):2$. If the latter holds, then, as $G_{234} < G_{1234}$, and $|G_{234}|$ is divisible by 3 and 5, we must have $G_{234} \cong \PSL_2(11)$. But then $\Delta$ cannot be a $G_{234}$ orbit. Hence $G_{1234} \cong \M_{22}:2$, and we then also conclude that $G_{234} \cong \PSL_2(11):2$. Now $G_{2345}$ and $G_{2346}$ are each transitive on $\Omega$ and, because $\Delta$ is not invariant under either of $g_5$ and $g_6$, $G_{234}  < G_{2345}$ and $G_{234}  < G_{2346}$. Thus the only possibility is that $G_{2345} = L = G_{2346}$, and we have part (i).\\
We now prove parts (ii) and (iii), making repeated use of [2E16(a); \cite{mcmullenschulte}]. First we show that $\{g_1, g_2, g_3, g_4 \}$ is a C-string for $G_{1234}$. Looking at $G_{123}$, if $G_{12} \cap G_{23} \ne G_2$, then, as $G_{12} \cong \Dih(8)$ and $G_{12} \cong \Dih(20)$, we must have $(g_2g_3)^5 \in G_{12}$. But $(g_2g_3)^5$ interchanges 5 and 12, which are in different $G_{12}$-orbits. Therefore $G_{12} \cap G_{23} = G_2$. So, by [2E16(a); \cite{mcmullenschulte}], $\{g_1, g_2, g_3 \}$ is a C-string for $G_{123}$. Since $g_4 \notin G_{23}$ (as, for example, $g_4$ interchanges 21 and 22), $\{g_2, g_3, g_4 \}$ is a C-string for $G_{234}$. By part (i) $G_{234} \cong \PSL_2(11):2$ and so, by \cite{atlas}, $G_{23} \cong \Dih(20)$ is a maximal subgroup of $G_{234}$. Hence $G_{123} \cap G_{234} > G_{23}$ would force $G_{123} \leq G_{234}$ which is impossible. Therefore $G_{123} \cap G_{234} =  G_{23}$ whence, by  [2E16(a); \cite{mcmullenschulte}], $\{g_1, g_2, g_3, g_4 \}$ is a C-string for $G_{1234}$.\\
Our attention now moves to $G_{2345} = G_{2346}$. We already know that $\{g_2, g_3, g_4 \}$ is a C-string for $G_{234}$. Now the strings for $\{g_3, g_4, g_5\}$ and $\{g_3, g_4, g_6\}$ are Coxeter diagrams and we see that $G_{345} \cong B_3$ and $G_{346} \cong \Sym(4)$. We note that $g_6 \notin G_{234}$ and so clearly $G_{346} \cap G_{234} = G_{34}$. We also observe that $g_3g_4g_3g_5g_4g_3g_5g_4g_5  \notin  G_{234}$ (as it maps 19 to 11) and $(g_4g_5)^2 \notin G_{234}$ (as it maps 19 to 6). Hence, from the structure of $B_3$, we also have $G_{345} \cap G_{234} = G_{34}$. Thus $\{g_2, g_3, g_4, g_5 \}$ and $\{g_2, g_3, g_4, g_6 \}$ are C-strings for $G_{2345}=G_{2346}$. \\
Finally we consider $G_{1234} \cap G_{2345}$ and $G_{1234} \cap G_{2346}$. If they are not equal to $G_{234} \cong \PSL_2(11):2$, which is a maximal subgroup of $G_{1234}$, then $G_{1234} \leq G_{2345}$ or $G_{1234} \leq G_{2346}$, neither of which is possible. Calling upon [2E16(a); \cite{mcmullenschulte}] yet again yields parts (i) and (ii).
\end{proof}

\begin{remark} The content of part (i) of Lemma \ref{L2.1} was also observed in \cite{hartleyhulpke}.

\end{remark}

\begin{lemma}\label{L2.3} The chamber graph of the C-string $\{g_2, g_3, g_4 \}$ has diameter 24 and disc sizes as follows.
$$\begin{tabular}{c|c|c|c|c|c|c|c|c|c|c|c|c}
$i$ & 1 & 2 & 3 & 4& 5 & 6& 7& 8& 9&10&11&12\\ \hline
$|\Delta_i(1)|$ & 3 & 5 & 7 & 9& 12 & 16& 21& 28& 37&48&61&77 \\
\end{tabular}$$
$$\begin{tabular}{c|c|c|c|c|c|c|c|c|c|c|c|c}
$i$ & 13 & 14 & 15 & 16& 17 & 18& 19& 20& 21&22&23&24\\ \hline
$|\Delta_i(1)|$ & 98 & 126 & 162 & 163& 138 & 110& 95& 60& 26&11&5&1 \\
\end{tabular}$$
\end{lemma}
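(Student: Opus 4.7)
The plan is to perform a breadth-first search (BFS) on the Cayley graph $\Gamma$ of $G_{234}$ with respect to the generating set $\{g_2, g_3, g_4\}$, starting from the identity vertex. Since Lemma~\ref{L2.1}(i) gives $G_{234} \cong \PSL_2(11):2$, which has order $1320$, this is a finite computation and the graph has exactly $1320$ vertices. As a sanity check on the disc sizes claimed in the statement, one first verifies
\[
1 + \sum_{i=1}^{24} |\Delta_i(1)| = 1320 = |G_{234}|,
\]
so if the disc sizes are correct, then the BFS does exhaust the group and the diameter is at most $24$.

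The first few layers can be checked by hand and serve as a template. Since $g_2, g_3, g_4$ are distinct involutions, $|\Delta_1(1)| = 3$. For $|\Delta_2(1)|$, the words of length two are the products $g_i g_j$ with $i \ne j$; using the relations $g_2 g_4 = g_4 g_2$ (so this contributes just one element), together with the fact that $g_2 g_3$ and $g_3 g_2 = (g_2 g_3)^{-1}$ are distinct (as $g_2 g_3$ has order $10$) and likewise $g_3 g_4$ and $g_4 g_3$ are distinct (order~$3$), one obtains exactly five new vertices, matching $|\Delta_2(1)| = 5$. Continuing layer by layer, each new $\Delta_{i+1}(1)$ is computed by right-multiplying every element of $\Delta_i(1)$ by each of $g_2, g_3, g_4$ and discarding those already seen at distance $\le i$.

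To carry this out without excessive bookkeeping, I would represent each element of $G_{234}$ either by its permutation action on $\Omega$ (as realised through the MOG), or, more compactly, by its action on the $12$-point projective line on which $\PSL_2(11)$ acts naturally, with the outer involution recorded as an extra bit. Either representation allows efficient equality testing, which is the only nontrivial step at scale. The paper acknowledges that machine calculation is used here precisely to handle the $1320$ elements spread across $24$ layers.

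The main obstacle is purely the combinatorial bookkeeping through the middle layers around $i = 15, 16, 17$, where the discs are largest (of sizes $162$ and $163$) and the greatest number of equalities among new words must be detected; everything else reduces to iterating the multiplication table. Once the BFS is completed and the disc sizes match the tabulated values, the diameter of the Cayley graph is read off as the largest $i$ with $\Delta_i(1) \ne \emptyset$, namely $i = 24$, completing the proof.
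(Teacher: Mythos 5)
Your proposal is correct and is essentially the same as the paper's: the paper simply performs this computation by machine (a quick \textsc{Magma} calculation), which amounts to exactly the breadth-first search over the $1320$ elements of $G_{234}\cong\PSL_2(11){:}2$ that you describe. Your hand-checks of the first two discs and the verification that $1+\sum_i|\Delta_i(1)|=1320$ are sensible sanity checks but add nothing beyond what the machine enumeration already establishes.
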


\begin{proof} This is a quick calculation using \textsc{Magma} \cite{magma}.
\end{proof}

\begin{remark} The only information from Lemma \ref{L2.3} we need in the proof of Theorem~\ref{maintheorem} is that the diameter is 24 -- the disc sizes are recorded as they may be of interest. Also we note that the chamber at maximal distance from the chamber corresponding to $1$ corresponds to the group element

$$\begin{tikzpicture}
 \path (0,1.05) node[draw=none] (p0) {\phantom{$g=$}};
 \path (0,0.0) node[anchor=north] (p1) {$g=$};
 \path (0,-1.05) node[draw=none] (p0) {\phantom{$g=$}};
\end{tikzpicture}
\begin{tikzpicture}
\draw (0,0.9) -- (3.0,0.9) -- (3.0,-0.9) -- (0,-0.9) -- (0,0.9);
\draw (1.0,-0.9) -- (1.0,0.9);
\draw (2.0,-0.9) -- (2.0,0.9);
\path (0.2,0.6) node[draw,shape=circle,fill=black,scale=0.15] (p1) {};
\path (0.2,0.2) node[draw,shape=circle,fill=black,scale=0.15] (p2) {};
\path (0.2,-0.2) node[draw,shape=circle,fill=black,scale=0.15] (p3) {};
\path (0.2,-0.6) node[draw,shape=circle,fill=black,scale=0.15] (p4) {};
\path (0.8,0.6) node[draw,shape=circle,fill=black,scale=0.15] (p5) {};
\path (0.8,0.2) node[draw,shape=circle,fill=black,scale=0.15] (p6) {};
\path (0.8,-0.2) node[draw,shape=circle,fill=black,scale=0.15] (p7) {};
\path (0.8,-0.6) node[draw,shape=circle,fill=black,scale=0.15] (p8) {};
\path (1.2,0.6) node[draw,shape=circle,fill=black,scale=0.15] (p9) {};
\path (1.2,0.2) node[draw,shape=circle,fill=black,scale=0.15] (p10) {};
\path (1.2,-0.2) node[draw,shape=circle,fill=black,scale=0.15] (p11) {};
\path (1.2,-0.6) node[draw,shape=circle,fill=black,scale=0.15] (p12) {};
\path (1.8,0.6) node[draw,shape=circle,fill=black,scale=0.15] (p13) {};
\path (1.8,0.2) node[draw,shape=circle,fill=black,scale=0.15] (p14) {};
\path (1.8,-0.2) node[draw,shape=circle,fill=black,scale=0.15] (p15) {};
\path (1.8,-0.6) node[draw,shape=circle,fill=black,scale=0.15] (p16) {};
\path (2.2,0.6) node[draw,shape=circle,fill=black,scale=0.15] (p17) {};
\path (2.2,0.2) node[draw,shape=circle,fill=black,scale=0.15] (p18) {};
\path (2.2,-0.2) node[draw,shape=circle,fill=black,scale=0.15] (p19) {};
\path (2.2,-0.6) node[draw,shape=circle,fill=black,scale=0.15] (p20) {};
\path (2.8,0.6) node[draw,shape=circle,fill=black,scale=0.15] (p21) {};
\path (2.8,0.2) node[draw,shape=circle,fill=black,scale=0.15] (p22) {};
\path (2.8,-0.2) node[draw,shape=circle,fill=black,scale=0.15] (p23) {};
\path (2.8,-0.6) node[draw,shape=circle,fill=black,scale=0.15] (p24) {};
\draw  (p1) to [bend left=30]  (p18);
\draw  (p2) to [bend right=25]  (p16);
\draw  (p3) to [bend left=5]  (p17);
\draw  (p4) to (p15);
\draw  (p5) to (p14);
\draw  (p6) to (p19);
\draw  (p7) to (p20);
\draw  (p8) to (p13);
\draw  (p9) to (p10);
\draw  (p11) to [bend left=15] (p23);
\draw  (p12) to [bend right=15] (p24);
\draw  (p21) to (p22);
\end{tikzpicture}
\begin{tikzpicture}
 \path (0,1.05) node[draw=none] (p0) {\phantom{}};
 \path (0,0.0) node[anchor=north] (p1) {};
 \path (0,-1.05) node[draw=none] (p0) {\phantom{}};
\end{tikzpicture}.$$

\end{remark}

We require one further result on $G_{234}$. Put $\Lambda = \Omega \setminus \Delta$ and $E = D \cap \Lambda$. Then $|E| = 11$ and $G_{234}$ preserves the partition $\{ E, \Lambda \setminus E \}$. Set 
$$\Theta = \{ \{ \alpha, \beta \} \; | \; \alpha \in E, \beta \in  \Lambda \setminus E \}.$$
Observe that $G_{234}$ acts upon $\Theta$ and that $\Delta_1, \Delta_2 \in \Theta$.

\begin{lemma}\label{different} The duads $\Delta_1$ and $\Delta_2$ are in different $G_{234}$-orbits.
\end{lemma}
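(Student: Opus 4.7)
The plan is to combine a divisibility argument with an explicit orbit computation in $G_{234}$. Since $|G_{234}| = 1320 = 2^{3}\cdot 3\cdot 5\cdot 11$ and $|\Theta| = |E|\cdot|\Lambda \setminus E| = 121$, we have $\gcd(121,1320) = 11$, so no $G_{234}$-orbit on $\Theta$ has length $121$; hence $G_{234}$ acts with at least two orbits on $\Theta$, and it is plausible that $\Delta_1$ and $\Delta_2$ lie in different ones.

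To pin down the orbit structure, I would use that $\PSL_2(11)$ has two transitive actions on $11$ points, so its orbits on $E \times (\Lambda \setminus E)$ are governed by the action of a point-stabilizer $\Alt(5) \le \PSL_2(11)$ on the opposite $11$-set. Since $\Alt(5)$ has no subgroup of index $11$, its orbits on $11$ points partition $11$ into parts dividing $60$, and the two natural possibilities ($1+10$, arising from two equivalent actions, or $5+6$, arising from the Paley biplane $2$-$(11,5,2)$ whose point and block sets are the two $11$-point $\PSL_2(11)$-sets) lead to $G_{234}$-orbits on $\Theta$ of lengths either $\{11,110\}$ or $\{55,66\}$. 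In either case $G_{234}$ has exactly two orbits on $\Theta$, and one has an invariant meaning (graph of a matching, or biplane incidence) distinguishing it from the other.

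Having established that there are two orbits, I would verify that $\Delta_1$ and $\Delta_2$ occupy different ones by direct calculation. The involutions $g_2, g_3, g_4$ act explicitly on $\Omega$ via the MOG diagrams given earlier; computing the $G_{234}$-orbit of $\Delta_1$ amounts to applying a short list of words in these generators to $\Delta_1$ and listing the resulting duads. Confirming that $\Delta_2$ does not appear in this orbit finishes the proof, and this is precisely the place where the paper's acknowledged \textsc{Magma} input on the $1320$-element group $\PSL_2(11){:}2$ is deployed.

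The main obstacle is that identifying the canonical $G_{234}$-invariant structure on $E \cup (\Lambda \setminus E)$ (either the bijection or the biplane incidence) directly from the MOG is awkward, as the datum is intrinsic to the outer automorphism of $\PSL_2(11)$ and not transparent from the pictorial definitions of $g_2, g_3, g_4$. The cleanest route is therefore to perform the orbit computation directly and read off the answer rather than to pursue a purely structural argument; with machine assistance this is immediate, and by hand it is tedious but routine, consisting of tracking a single duad through iterated applications of three explicit involutions.
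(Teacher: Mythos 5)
Your first two paragraphs do not decide the question: the divisibility observation ($121 \nmid 1320$) only shows that $G_{234}$ is intransitive on $\Theta$, and the $\Alt(5)$-orbit analysis only narrows the possible orbit lengths to two unresolved scenarios; neither step ever attaches $\Delta_1$ or $\Delta_2$ to a particular orbit. The entire burden of the lemma is therefore carried by your third step, ``compute the orbit of $\Delta_1$ and confirm $\Delta_2$ is not in it,'' which you describe but do not execute and whose outcome you do not report. As a written proof this is a genuine gap: everything that is actually argued is compatible with $\Delta_1$ and $\Delta_2$ lying in the same orbit, and the decisive verification is deferred to an enumeration (of an orbit of size $55$ or $66$ under words in $g_2,g_3,g_4$) that appears nowhere. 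The plan would of course succeed if the computation were carried out -- the group has order $1320$ and the action is on $121$ duads -- but then the proof is ``by machine,'' which is heavier than necessary and different in kind from what the statement requires.

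The paper's own proof avoids any enumeration by comparing stabilizers, using only data already in hand. From the MOG pictures, both $g_2$ and $g_3$ preserve $\Delta_1$, so $\Dih(20)\cong\langle g_2,g_3\rangle\leq \Stab_{G_{234}}(\Delta_1)$; since $\Dih(20)$ is maximal in $G_{234}\cong\PSL_2(11){:}2$ (\textsc{Atlas}) and $\Delta_1$ is not $G_{234}$-invariant, this containment is an equality, so $|\Stab_{G_{234}}(\Delta_1)|=20$. On the other hand $g_3$ and $g_4$ preserve $\Delta_2$, so $\Dih(6)\cong\langle g_3,g_4\rangle\leq\Stab_{G_{234}}(\Delta_2)$, whence $6$ divides $|\Stab_{G_{234}}(\Delta_2)|$. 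Stabilizers of duads in the same orbit are conjugate, and $6\nmid 20$, so the orbits are distinct. You could graft exactly this argument onto your setup: it replaces your unexecuted orbit computation with a three-line order comparison, and it also explains, a posteriori, the orbit lengths $66=1320/20$ and $55=1320/24$ that your structural paragraph guessed at but could not pin down.
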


\begin{proof} Clearly $\langle g_2,g_3 \rangle \leq Stab_{G_{234}}(\Delta_1)$. If $\langle g_2,g_3 \rangle \neq Stab_{G_{234}}(\Delta_1)$, then, by \textsc{Atlas}\cite{atlas}, $Stab_{G_{234}}(\Delta_1) = G_{234} $, which is not the case. Hence 
$$ \Dih(20) \cong \langle g_2,g_3 \rangle  = Stab_{G_{234}}(\Delta_1).$$
Since $ \Dih(6) \cong \langle g_3,g_4 \rangle  \leq Stab_{G_{234}}(\Delta_2)$, it follows that $\Delta_1$ and $\Delta_2$ are in different $G_{234}$-orbits.
\end{proof}

\textbf{Proof of Theorem 1.1}

 From Lemma \ref{L2.1}(i) $G_{234} \cong \PSL_2(11):2$ and $G_{2345} =  G_{2346} \cong \M_{12}:2$. Consulting the \textsc{Atlas}\cite{atlas}, gives the permutation characters for $G_{234}$ in $G_{2345} =G_{2346}$ and for $G_{2345}$ in $G$. From this we determine that $G_{2345} = G_{2346}$ acting on the right cosets of $G_{234}$ has rank 4 and $G$ acting on the right cosets of $G_{2345}$ has rank 3.\\

(1)(i) Double coset representatives for $G_{234}$ in $G_{2345}$ are $1, g_5, g_5g_4g_5g_3g_4g_5$ and\\
 $g_5g_4g_3g_2g_3g_2g_4g_3g_5g_4g_5$.\\
(ii) Double coset representatives for $G_{234}$ in $G_{2346}$ are $1, g_6,  g_6g_4g_3g_2g_3g_2g_3g_2g_3g_2g_4g_6$ and\\
$g_6g_4g_3g_2g_3g_2g_4g_3g_2g_3g_2g_3g_4g_6.$\\

 Recall that $\Delta$ is a $G_{234}$-orbit of $\Omega$. Setting 
$x_1 =   g_5g_4g_5g_3g_4g_5$ and $x_2 =
g_5g_4g_3g_2g_3g_2g_4g_3g_5g_4g_5$, we see that $\Delta^{g_5} = \Delta_1$, $\Delta^{x_1} = \Delta_2$ and $|\Delta \cap \Delta^{x_2}| = 1$. 
If $g_5$ and $x_1$ are in the same $G_{234}$-double coset, then $x_1 = h_1g_5h_2$ for some $h_1, h_2 \in G_{234}$. Then
$$\Delta_2 = \Delta^{x_1} = \Delta^{h_1g_5h_2} = \Delta^{g_5h_2} = \Delta_1^{h_2},$$
which contradicts Lemma~\ref{different}. Hence $g_5$ and $x_1$ are in different double cosets of $G_{234}$. Since $\Delta^1 = \Delta$ and $|\Delta \cap \Delta^{x_2}| = 1$, we see that $1$, $g_5, x_1$ and $x_2$ are in different $G_{234}$-double cosets of $G_{2345}$. Thus, as the rank of $G_{2345}$ on the right cosets of $G_{234}$ is 4, (1)(i) follows.\\

Turning to part (ii), this time we set $y_1 = g_6g_4g_3g_2g_3g_2g_3g_2g_3g_2g_4g_6$ and \\ $y_2 = g_6g_4g_3g_2g_3g_2g_4g_3g_2g_3g_2g_3g_4g_6$. We check that $\Delta^{g_6} = \Delta_1$, $\Delta^{y_1} = \Delta_2$ and $|\Delta \cap \Delta^{y_2}| = 1$. Again using Lemma~\ref{different} we may argue as in (i) to deduce that $1$, $g_6, y_1$ and $y_2$ are in different double $G_{234}$-cosets of $G_{2346}$, whence we obtain (1)(ii).\\

(2) Double coset representatives for $G_{2345} = G_{2346}$ in $G$ are $1, g_1$ and $g_1g_2g_3g_2g_3g_4g_3g_2g_1.$\\

Put $h = g_1g_2g_3g_2g_3g_4g_3g_2g_1$. Then we see that $D \cap D^{g_1} = \{0,3,8,18\}$ and $D \cap D^h =\{1,4,5,8,9,10 \}$. Consequently $g_1$ and $h$ are in different double $G_{2345}$ cosets of $G$. Hence, as the rank of $G$ on the right cosets of $G_{2345} = G_{2346}$ is 3, we have (2).\\

Now (1)(i) and Lemma \ref{L2.3} imply that a word in $\{t_2, t_3, t_4, t_5\}$ has length at most $24 + 11  + 24 =59$. Then, employing (2), a word in $\{t_1, t_2, t_3, t_4, t_5\}$ has length at most $ 59 + 9 + 59 = 127$. This establishes Theorem \ref{maintheorem} for the C-string $\{t_1, t_2, t_3, t_4, t_5\}$ . For the C-string $\{s_1, s_2, s_3, s_4, s_5 \}$ a similar argument using (1)(ii) shows that its chamber graph has diameter at most 133, completing the proof of Theorem \ref{maintheorem}.\\

\end{document}